\theoremstyle{thmstyleone}%
\newtheorem{theorem}{Theorem}
\newtheorem{proposition}[theorem]{Proposition}%
\theoremstyle{thmstyletwo}%
\newtheorem{example}{Example}%
\newtheorem{remark}{Remark}%
\newtheorem{lemma}{Lemma}%
\theoremstyle{thmstylethree}%
\newcommand{\argmaxR}{\raisebox{-1.8mm}{${{\displaystyle \rm arg\,\! max}
                     \atop {\scriptstyle x \in \mathbb{R}}}$}}
\begin{document}

\title[Article Title]{On a transform of the Vincze-statistic and its exact and asymptotic distribution}


\author{\fnm{Dietmar} \sur{Ferger}}\email{dietmar.ferger@tu-dresden.de}



\affil{\orgdiv{Fakult\"{a}t Mathematik}, \orgname{Technische Universit\"{a}t Dresden}, \orgaddress{\street{Zellescher Weg 12-14}, \city{Dresden}, \postcode{01069}, \country{Germany}}}




\abstract{We motivate a new nonparametric test for the one-sided two-sample problem, which is based on a transform $T$
of the Vincze-statistic $(R,D)$. The exact and asymptotic distribution of $T$ is derived. The fundamental idea can also be applied to the
two-sided problem. Here too, a limit theorem can be proved.
}

\keywords{two-sample problem, Vincze-statistic, exact $p$-values, empirical process, Maxwell-Boltzmann distribution}


\pacs[MSC Classification]{62E15,62E20, 62G10.}

\maketitle

\section{Introduction}
Let $X_1,\ldots,X_n$ and $Y_1,\ldots,Y_m$ be two independent samples of i.i.d. real random variables with
continuous distribution functions $F$ and $G$, respectively. For deciding wether the hypothesis $H_0: F=G$ or the one-sided alternative
$H_1: F \ge G, F \neq G$ is true the Smirnov-test uses $D_{nm}:= \sup_{x \in \mathbb{R}} \{F_n(x)-G_m(x)\}$, where $F_n$ and
$G_m$ are the empirical distribution functions of $X_1,\ldots,X_n$ and $Y_1,\ldots,Y_m$, respectively.
The distributional behavior of $D_{nm}$ under the hypothesis $H_0$ is well-known for a long time.
Smirnov \cite{Smirnov} shows in case $n=m$ that
$$
 \mathbb{P}(\sqrt{n/2} D_{nn} \le x) \rightarrow 1-\exp(-2 x^2) \quad \text{ for all } x \ge 0.
$$
Gnedenko and Korolyuk \cite{Gnedenko} find the exact distribution of $D_{nn}$ and later on Korolyuk \cite{Korolyuk} extends the result to
$D_{nm}$, when $m$ is an integral multiple of $n$, $m=n p$ with $p \in \mathbb{N}$.\\

Next, let $Z_1,\ldots,Z_{n+m}$ be the \emph{pooled sample}, i.e. $Z_i:=X_i, 1 \le i \le n,$ and $Z_{n+i}:= Y_i, 1 \le i \le m$.
If $Z_{(1)} \le Z_{(2)} \le \ldots, Z_{(n+m)}$ denote the pertaining order statistics, then
$$
 D_{nm}=\max_{1 \le i \le n+m} \{F_n(Z_{(i)})-G_m(Z_{(i)})\}.
$$
Thus
$$
 R_{nm}:= \min\{1 \le i \le n+m: F_n(Z_{(i)})-G_m(Z_{(i)})=D_{nm}\}
$$
is the smallest index among those order statistics $Z_{(i)}$ at which the two-sample process
$\{F_n(x)-G_m(x): x \in \mathbb{R}\}$ attains its maximal value $D_{nm}$. In the special simple test situation
$H_0^*:F=G=U(0,1)$ against $H_1^a: F=U(0,1), G=U(a,1)$, for a fixed $a \in (0,1)$, Vinzce \cite{Vincze1,Vincze2} uses the pair
$(R_{nm},D_{nm})$ instead of the single $D_{nm}$. More precisely, in the case $m=n$ he determines the likelihood ratio of
$(D_{nn},R_{nn}/n-D_{nn})$ for $H_1^a$ and $H_0^*$ and then uses the corresponding Neyman-Pearson test without randomizing part.
As it turns out the Neyman-Pearson test has a significantly superior power in comparison to the Smirnov-test, confer also Koul and Quine \cite{Koul}.
Under $H_0$ the exact distribution of the so-called \emph{Vincze-statistic} $(R_{nm},D_{nm})$ has been computed by Vincze \cite{Vincze0} for equal sample sizes $(m=n)$ and for general sample sizes by Steck and Simmons \cite{Steck}. Gutjahr \cite{Gutjahr} gives an explicit formula, see Proposition \ref{Gutjahr} below.
Note that $(R_{nm},D_{nm})$ is a discrete random variable with range
\begin{equation} \label{range}
 \{1,\ldots,n+m\} \times \{\frac{k}{m}: 0 \le k \le m\}.
\end{equation}
 Moreover, it is distribution free under $H_0$. Therefore, we can apply Corollary 2.2.7 and Remark 2.2.9 of Gutjahr \cite{Gutjahr}, which yield the following basic result. Here, $[x], x \in \mathbb{R},$ denotes the floor function.\\

\begin{proposition}\label{Gutjahr}(\textbf{Gutjahr}) Assume that $F=G$ and that $m=np$ with $p \in \mathbb{N}$. Then for every $r \in \{1,\ldots,n+m\}$ and every $k \in \{0,1,\ldots,m\}$ such that
\begin{equation} \label{NB}
 s:= (r+k) \frac{n}{n+m} \in \mathbb{N} \cup \{0\} \; \text{ and } \; s \le \min\{r,n\}
\end{equation}
the following equation holds:
\begin{align*}
 &P_{nm}(r,k):= \mathbb{P}(R_{nm}=r, D_{nm}= \frac{k}{m})\\
 &=\frac{\binom{n+m-r+1}{n-s}}{\binom{n+m}{n}} \frac{k+1}{n+m-r+1} \sum_{j=0}^{[\frac{n}{n+m}k]} \frac{\frac{m}{n}}{r+k-1-j \frac{n+m}{n}} \binom{j \frac{n+m}{n}-k}{j} \binom{r+k-1-j \frac{n+m}{n}}{s-1-j}
\end{align*}
If $(r,k)$ in the range (\ref{range}) does not satisfy (\ref{NB}), then $\mathbb{P}(R_{nm}=r, D_{nm}= \frac{k}{m})=0$.
\end{proposition}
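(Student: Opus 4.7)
The plan is to exploit the distribution-freeness of the Vincze-statistic under $H_0$ and reduce the problem to a lattice-path count. Because $(R_{nm},D_{nm})$ depends on the samples only through the pattern of $X$-labels and $Y$-labels in the pooled order statistic, under $F=G$ each of the $\binom{n+m}{n}$ arrangements of the $n$ $X$-labels among the $n+m$ positions is equally likely; hence $P_{nm}(r,k)$ equals the number of favourable arrangements divided by $\binom{n+m}{n}$.

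Encode an arrangement as a path $S_0=0,S_1,\ldots,S_{n+m}$ whose $i$-th increment is $+1/n$ if $Z_{(i)}$ is an $X$-observation and $-1/m$ otherwise, so that $S_i=F_n(Z_{(i)})-G_m(Z_{(i)})$. The event $\{R_{nm}=r,D_{nm}=k/m\}$ then reads $S_r=k/m$, $S_i<k/m$ for $i<r$ and $S_i\le k/m$ for $i>r$. Writing $s$ for the number of $+1/n$-steps among the first $r$ steps, the equality $s/n-(r-s)/m=k/m$ forces $s=(r+k)\,n/(n+m)$, which is exactly condition~\eqref{NB}; in particular $0\le s\le \min\{r,n\}$ is necessary for a solution to exist, whence $P_{nm}(r,k)=0$ when \eqref{NB} fails.

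Next I split the arrangement at position~$r$. For the tail $i=r,\ldots,n+m$ the path starts at $k/m$, must stay $\le k/m$, uses $n-s$ up-steps among the remaining $n+m-r$ steps and ends at $S_{n+m}=0$; by a standard application of the cycle lemma (or Lindström's ballot formula) the number of such tails equals $\frac{k+1}{n+m-r+1}\binom{n+m-r+1}{n-s}$, which is precisely the outer factor in Gutjahr's formula. For the head $i=0,\ldots,r$ the path runs from $0$ to $k/m$, uses $s$ up-steps, and must stay \emph{strictly} below $k/m$ for $i<r$; counting these first-passage paths is where the inner sum enters.

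The main obstacle is evaluating the head count. Because the up-step $+1/n$ and the down-step $-1/m$ have different sizes, ordinary single-step reflection across the level $k/m$ is not an involution; one instead reflects \emph{blocks} of $p+1$ consecutive steps that close a full unit on the $1/m$-grid. Applying this reflection iteratively (equivalently, performing inclusion-exclusion on the number $j$ of prior forbidden visits to the level $k/m$) produces each summand $\frac{m/n}{\,r+k-1-j(n+m)/n\,}\binom{j(n+m)/n-k}{j}\binom{r+k-1-j(n+m)/n}{s-1-j}$, with $j$ ranging up to $\lfloor nk/(n+m)\rfloor$, beyond which no up-steps remain in the reflected path. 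Multiplying head and tail counts and dividing by $\binom{n+m}{n}$ then gives the claimed $P_{nm}(r,k)$; the remaining work is routine algebraic bookkeeping that merges the two factors into the stated form.
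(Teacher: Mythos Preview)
The paper does not prove this proposition. It is stated with attribution to Gutjahr's doctoral thesis \cite{Gutjahr} (see the sentence preceding Proposition~\ref{Gutjahr}: ``Gutjahr \cite{Gutjahr} gives an explicit formula, see Proposition \ref{Gutjahr} below''), and the paper then uses it as an input for Theorem~\ref{exactdistribution}. So there is no proof in the paper to compare your attempt against.

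That said, your strategy is the natural one and is in the spirit of how such joint distributions are classically obtained (Vincze, Steck--Simmons, Gutjahr): reduce via distribution-freeness to a uniform measure on the $\binom{n+m}{n}$ label patterns, encode a pattern as a lattice path with increments $+1/n$ and $-1/m$, read off the event $\{R_{nm}=r,\,D_{nm}=k/m\}$ as a first-passage/maximum condition, and factor the count at position $r$ into a head part (first hit of level $k/m$) and a tail part (stay weakly below $k/m$ thereafter). Your derivation of condition~\eqref{NB} and of the vanishing of $P_{nm}(r,k)$ when it fails is correct. The tail factor $\frac{k+1}{n+m-r+1}\binom{n+m-r+1}{n-s}$ is indeed a cycle-lemma/ballot count for weak non-exceedance paths with two step sizes, and the head count is the genuinely delicate piece; your description of it as an inclusion--exclusion over prior visits to the maximal level (equivalently, iterated block reflection adapted to the $p{:}1$ step ratio) is the right mechanism, though in a written proof this is exactly the part that requires a careful induction or an explicit bijection rather than the phrase ``routine algebraic bookkeeping''. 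If you want to turn the plan into a self-contained proof, that head count is where the real work---and the only potential gap---lies.
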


\vspace{0.3cm}
With a view to Vincze's idea, our approach is to use a transform $T_{nm}=h(R_{nm},D_{nm})$ of the Vincze-statistic $(R_{nm},D_{nm})$ as follows:
\[
T_{nm} :=
\begin{cases}
   \frac{D_{nm}}{\sqrt{R_{nm} (n+m-R_{nm})}}     & \text{ if } R_{nm} \in \{1, \ldots, n+m-1\}, \\
  0      & \text{ if } R_{nm}=n+m.
\end{cases}
\]

By Remark \ref{Rnmequalnplusm} the event $\{R_{nm}=n+m\}$ has positive probability $\frac{m}{(n+m)(n+m-1)}$ and therefore cannot be ignored.\\

The paper is structured as follows: In section 2 we motivate the upper $T_{nm}$-test (called \emph{$V$-test}) and determine the exact null-distribution of $T_{nm}$ in case that $m$ is an integral multiple of $n$. In section 3 it is shown that $\sqrt{nm(n+m)} T_{nm}$ converges in distribution to a random variable $Z$, which follows the Maxwell-Boltzmann law. Moreover, we report on the results of a small Monte Carlo simulation study. Here, the $V$-test proves to be superior to the Smirnov-test ($S$-test). In the last section, the basic idea is carried over to the two-sided alternative $H_1^*:F \neq G$.  For the corresponding transformation, we also determine the limit distribution.

\section{Exact null-distribution of $T_{nm}$}
To motivate our test-statistic we investigate $T_{nm}$ under the usual limiting regime, that is $\min\{n,m\} \rightarrow \infty$ and
\begin{equation} \label{limitingregime}
 \frac{n}{n+m} \rightarrow \lambda \in (0,1).
\end{equation}

Let $H_{nm}$ be the empirical distribution function pertaining to the pooled sample $Z_1,\ldots,Z_{n+m}$. Observe that
\begin{equation} \label{Hnm}
 H_{nm} = \frac{n}{n+m} F_n+\frac{m}{n+m} G_m \rightarrow \lambda F+(1-\lambda)G =:H \; \text{almost surely (a.s.)}
\end{equation}
uniformly on $\mathbb{R}$ by the Glivenko-Cantelli Theorem. Assume that $F$ and $G$ in the alternative are such that $D=F-G$ has a unique maximizing point $\tau$ (as for instance when $F$ arises from $G$ as in Example \ref{example} below). According to Lemma \ref{AsymptoticH1} in the appendix $H(\tau) \in (0,1)$  and
\begin{equation} \label{limitTnm}
 (n+m) T_{nm} \rightarrow   \frac{\sup_{x \in \mathbb{R}}\{F(x)-G(x)\}}{\sqrt{H(\tau)(1-H(\tau))}} \; a.s.
\end{equation}
whereas
\begin{equation} \label{limitDnm}
 D_{nm} \rightarrow  \sup_{x \in \mathbb{R}}\{F(x)-G(x)\} \; a.s.
\end{equation}
Notice that the limit in (\ref{limitTnm}) is positive. On the other hand, $(n+m) T_{nm} \rightarrow 0$ in probability under the hypothesis, see Lemma \ref{AsymptoticH0} in the appendix.
Therefore, given a level of significance $\alpha \in (0,1)$ our test rejects the hypothesis $H_0$, if $\sqrt{nm(n+m)} T_{nm}> c_{nm}(\alpha)$, where the
exact critical value $c_{nm}(\alpha)$ can be computed by Theorem \ref{exactdistribution}, where we give an explicit formula for the exact distribution of $T_{nm}$ under $H_0$. If $n$ and $m$ are large the exact critical value can be replaced by the $(1-\alpha)$-quantile of the Maxwell-Boltzmann distribution as a consequence of Theorem \ref{convergencetoMB}, which gives the limit distribution of $\sqrt{nm(n+m)} T_{nm}$.

Notice that the limit in (\ref{limitTnm}) is larger than the limit in (\ref{limitDnm}) by the factor $1/\sqrt{H(\tau)(1-H(\tau))} \ge 2$.
Now, if for instance $F$ differs from $G$ only slightly in the left tails of $G$, then $H(\tau)$ is small, because $H(\tau) \in (G(\tau),F(\tau)).$
Consequently, the factor becomes significantly greater than $2$. As a numerical example assume that $G(\tau) < F(\tau)=0.02$, then the factor is greater than $7$. For that reason we strongly expect that the V-test is much more likely to detect the alternative than the Smirnov-test.\\

\begin{example} \label{example} Let $\tau \in \mathbb{R}$ and $\delta > 1$. Define $F=F_{\tau,\delta}$ by
$$
 F(x):= \left\{ \begin{array}{l@{\quad,\quad}l}
                 \delta G(x) & x \le \tau\\ \beta (G(x)-G(\tau))+ \delta G(\tau) & x > \tau,
               \end{array} \right.
$$
If $\delta G(\tau) < 1$ and $\beta >0$ satisfies (*) $\delta G(\tau)+\beta(1-G(\tau))=1$, then $F$ is a continuous distribution function. Further, one verifies that
$$
 D(x):= F(x)-G(x) = \left\{ \begin{array}{l@{\quad,\quad}l}
                 (\delta-\beta)(1-G(\tau)) G(x) & x \le \tau\\ (\delta-\beta)G(\tau)(1-G(x)) & x > \tau.
               \end{array} \right.
$$
Since $\delta>1$ and therefore by (*) $\beta<1$ we see that $\delta-\beta$ is positive and hence $D=F-G \ge0$.  Moreover, it follows that $\tau$ is a maximizing point of $D$ with $D(\tau)>0$. This shows that $F$ and $G$ lie in the alternative $H_1$. If in addition $G$ is strictly increasing in a neighborhood of $\tau$, then $\tau$ is the unique maximizer.
\end{example}

\vspace{0.3cm}
In the following theorem we use the notation $a\wedge b:= \min\{a,b\}$ for reals $a$ and $b$.\\

\begin{theorem} \label{exactdistribution} Assume that $F=G$ and that $m=np$ with $p \in \mathbb{N}$. Then for each $x \ge 0$ the distribution function $J_{nm}$ of $T_{nm}$ is given by
\begin{equation} \label{dfT}
J_{nm}(x)=\mathbb{P}(T_{nm} \le x)= \sum_{r=1}^{n+m-1} \sum_{k=0}^{[\kappa x] \wedge m} P_{nm}(r,k)+ \frac{m}{(n+m)(n+m-1)},
\end{equation}
where
$$
 \kappa= m \sqrt{r(n+m-r)}.
$$
For $x<0$ the probability is equal to zero.
\end{theorem}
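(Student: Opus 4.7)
The plan is to exploit the discreteness of the Vincze statistic $(R_{nm},D_{nm})$ and decompose the event $\{T_{nm}\le x\}$ according to the value of $R_{nm}$. For $x\ge 0$, I would split $\{T_{nm}\le x\}$ into the two disjoint events $\{R_{nm}=n+m\}$ and $\{R_{nm}\le n+m-1,\,T_{nm}\le x\}$. By definition $T_{nm}=0$ on the first event, hence it is automatically contained in $\{T_{nm}\le x\}$ and, by the probability recalled in Remark \ref{Rnmequalnplusm}, contributes the additive constant $m/((n+m)(n+m-1))$ that appears at the end of (\ref{dfT}).

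For the second event I would use that $D_{nm}$ takes only the values $k/m$ with $0\le k\le m$, write $D_{nm}=k/m$, and observe that on $\{R_{nm}=r\}$ with $r\in\{1,\ldots,n+m-1\}$ one has $T_{nm}=k/(m\sqrt{r(n+m-r)})=k/\kappa$. Thus $\{T_{nm}\le x,\,R_{nm}=r,\,D_{nm}=k/m\}$ is non-empty exactly when $k\le \kappa x$, which, since $k$ is a nonnegative integer bounded by $m$, amounts to $0\le k\le [\kappa x]\wedge m$. Summing the atomic probabilities $P_{nm}(r,k)$ provided by Proposition \ref{Gutjahr} over this range and over $r=1,\ldots,n+m-1$ yields the double sum in (\ref{dfT}). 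For $x<0$ nothing is to be done: $D_{nm}\ge 0$ together with the non-negativity of the square root forces $T_{nm}\ge 0$ almost surely, so $\mathbb{P}(T_{nm}\le x)=0$.

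No serious obstacle is expected; the argument is an accounting exercise. The two points demanding a little care are the disjointness of the decomposition (handled by singling out $\{R_{nm}=n+m\}$ before converting the inequality $T_{nm}\le x$) and the admissibility condition (\ref{NB}). The latter is harmless because Proposition \ref{Gutjahr} explicitly sets $P_{nm}(r,k)=0$ for those $(r,k)$ in the range (\ref{range}) that violate (\ref{NB}), so one may sum unrestrictedly over $0\le k\le [\kappa x]\wedge m$ without introducing spurious terms.
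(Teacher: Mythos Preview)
Your proposal is correct and follows essentially the same decomposition as the paper's own proof: split off $\{R_{nm}=n+m\}$, then on $\{R_{nm}=r\le n+m-1,\,D_{nm}=k/m\}$ rewrite $T_{nm}\le x$ as $k\le[\kappa x]$ and sum the Gutjahr probabilities $P_{nm}(r,k)$. The only point to flag is your appeal to Remark~\ref{Rnmequalnplusm} for the constant $m/((n+m)(n+m-1))$: in the paper that remark is explicitly a \emph{consequence} of the present proof (``the last part of the above proof shows\ldots''), so to avoid circularity you should compute $P_{nm}(n+m,0)$ directly from Proposition~\ref{Gutjahr}, exactly as the paper does.
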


\begin{proof} Since the set in (\ref{range}) is the range of $(R_{nm},D_{nm})$, a decomposition yields:
\begin{align}
& \mathbb{P}(T_{nm} \le x)= \sum_{r=1}^{n+m} \sum_{k=0}^m \mathbb{P}(R_{nm}=r,D_{nm}= \frac{k}{m},T_{nm} \le x) \nonumber \\
&= \sum_{r=1}^{n+m-1} \sum_{k=0}^m \mathbb{P}\Big(R_{nm}=r,D_{nm}= \frac{k}{m},\frac{\frac{k}{m}}{\sqrt{r(n+m-r)}} \le x\Big) \label{fsummand}\\
& +\sum_{k=0}^m \mathbb{P}\Big(R_{nm}=n+m,D_{nm}= \frac{k}{m},0\le x\Big). \label{ssummand}
\end{align}
Since $\frac{\frac{k}{m}}{\sqrt{r(n+m-r)}} \le x \Leftrightarrow  k \le [m \sqrt{r(n+m-r)}\; x]$, it follows that the sum in (\ref{fsummand}) is equal to the first summand in (\ref{dfT}). As to the sum in (\ref{ssummand}) observe that for $r=n+m$ the second requirement in (\ref{NB}) is equivalent to
$k \le 0$. Consequently, $P_{nm}(n+m,k)=0$ for all $k \in \{1,\ldots,m\}$ and therefore (by $x \ge 0$),
\begin{equation} \label{Rnm0}
 \sum_{k=0}^m \mathbb{P}\Big(R_{nm}=n+m,D_{nm}= \frac{k}{m},0\le x\Big)= \sum_{k=0}^{m} P_{nm}(n+m,k)= P_{nm}(n+m,0).
\end{equation}
The pair $(n+m,0)$ with $s=n$ satisfies condition (\ref{NB}), whence after some simple algebra one obtains that $P_{nm}(n+m,0)= \frac{m}{(n+m)(n+m-1)}$, which is the second summand in (\ref{dfT}). Finally, since $T_{nm} \ge 0$, it follows that $\mathbb{P}(T_{nm} \le x)=0$ for all negative $x$.
\end{proof}

\vspace{0.4cm}
\begin{remark} \label{Rnmequalnplusm}
Notice that the left side in (\ref{Rnm0}) is equal to $\mathbb{P}(R_{nm}=n+m)$. So, the last part of the above proof shows that $\mathbb{P}(R_{nm}=n+m)=\frac{m}{(n+m)(n+m-1)}.$
\end{remark}

\vspace{0.4cm}
\begin{remark} \label{code} The source code in Mathematica \cite{Wolfram} that implements Gutjahr's formula is only a few lines long. According to Theorem \ref{exactdistribution}, the distribution function $J_{nm}$ of $T_{nm}$ can therefore be calculated quickley without any problems.
\end{remark}

\section{Asymptotic null-distribution}
We derive the asymptotic distribution of $T_{nm}$ under the hypothesis $H_0$. It turns out that $\sqrt{nm(n+m)} T_{nm}$ converges to the
\emph{Maxwell-Boltzmann} distribution, which has distribution function
$$
 K(x)=2 \Phi(x)- \sqrt{\frac{2}{\pi}} x \exp\{-\frac{x^2}{2}\}-1, \; x \ge 0,
$$
and $K(x)=0$ for all negative $x$. Here, $\Phi$ is the distribution function of the standard normal law $N(0, 1)$.\\

\begin{theorem} \label{convergencetoMB} Assume that $H_0$ holds. Then under the limiting regime (\ref{limitingregime}),
\begin{equation*} 
 K_{nm}(x):= \mathbb{P}\Big(\sqrt{nm(n+m)} T_{nm} \le x\Big) \rightarrow K(x)
\end{equation*}
for all  $x \ge 0$. For negative $x$ the
probabilities $K_{nm}(x), n,m \in \mathbb{N},$ are equal to zero.
\end{theorem}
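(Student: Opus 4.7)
The plan is to rewrite $\sqrt{nm(n+m)}\,T_{nm}$ as an (almost everywhere) continuous functional of the maximum and of the location of the maximum of the standardized two-sample empirical process, pass to its Brownian-bridge limit, and then identify the distribution of the resulting functional.

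Since $T_{nm}$ is distribution-free under $H_0$, I would first assume $F=G=U(0,1)$. By Remark \ref{Rnmequalnplusm}, $\mathbb{P}(R_{nm}=n+m) = m/((n+m)(n+m-1)) \to 0$ under (\ref{limitingregime}), so that event is asymptotically negligible; on its complement, setting $\rho_{nm} := R_{nm}/(n+m)$,
$$\sqrt{nm(n+m)}\,T_{nm} = \frac{\sqrt{nm/(n+m)}\,D_{nm}}{\sqrt{\rho_{nm}(1-\rho_{nm})}}.$$
By the two-sample Donsker theorem the process $\alpha_{nm} := \sqrt{nm/(n+m)}(F_n - G_m)$ converges weakly in $D[0,1]$ to a standard Brownian bridge $B$. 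Writing $M^* := \max_{u\in[0,1]} B(u)$ and letting $\tau^*\in(0,1)$ denote the a.s.\ unique location of this maximum, an argmax-continuous-mapping theorem for Skorokhod-convergent sequences with a unique-argmax continuous limit yields
$$\bigl(\sqrt{nm/(n+m)}\,D_{nm},\;\rho_{nm}\bigr) \xrightarrow{d} (M^*,\tau^*);$$
here one uses $\rho_{nm} = H_{nm}(Z_{(R_{nm})})$ together with $\|H_{nm}-\mathrm{id}\|_\infty \to 0$ a.s.\ under $H_0$ to pass from the $[0,1]$-argmax of $\alpha_{nm}$ to $\rho_{nm}$. Since $(y,t)\mapsto y/\sqrt{t(1-t)}$ is continuous at $(M^*,\tau^*)$ a.s., a further continuous-mapping step gives
$$\sqrt{nm(n+m)}\,T_{nm} \xrightarrow{d} Z := \frac{M^*}{\sqrt{\tau^*(1-\tau^*)}}.$$

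It remains to identify the law of $Z$. I would invoke the classical joint density of $(\tau^*,M^*)$ for a Brownian bridge on $[0,1]$,
$$f_{(\tau^*,M^*)}(t,x) = \sqrt{\tfrac{2}{\pi}}\,\frac{x^2}{(t(1-t))^{3/2}}\exp\!\Bigl(-\frac{x^2}{2\,t(1-t)}\Bigr), \quad (t,x)\in(0,1)\times(0,\infty),$$
whose marginals (uniform for $\tau^*$; Rayleigh $1-e^{-2x^2}$ for $M^*$) are standard and can be checked directly. Substituting $z = x/\sqrt{t(1-t)}$ at fixed $t$, with Jacobian $\sqrt{t(1-t)}$, transforms this into
$$\sqrt{\tfrac{2}{\pi}}\,z^2 e^{-z^2/2}, \quad (t,z)\in(0,1)\times(0,\infty),$$
which factorizes: $\tau^*$ and $Z$ are independent, $\tau^*$ is uniform on $[0,1]$, and $Z$ has the Maxwell-Boltzmann density $\sqrt{2/\pi}\,z^2 e^{-z^2/2}$. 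Antidifferentiating this density yields the stated $K$, and the case $x<0$ is immediate from $T_{nm}\ge 0$.

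The main obstacle is the joint convergence of supremum and argmax in the second step: because $\alpha_{nm}$ is a càdlàg step function and the argmax is not continuous in the sup-norm, one has to invoke a Skorokhod-type argmax theorem (e.g.\ in the spirit of Ferger) together with the a.s.\ uniqueness of $\tau^*$. Once this is available and the replacement of the $[0,1]$-argmax of $\alpha_{nm}$ by $\rho_{nm}$ is justified via uniform convergence of $H_{nm}$, the remaining change-of-variables computation and the identification of the Maxwell-Boltzmann law are routine.
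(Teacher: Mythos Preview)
Your proposal is correct and follows essentially the same route as the paper: reduce to $F=G=U(0,1)$ by distribution-freeness, rewrite the statistic, pass from $R_{nm}/(n+m)$ to the $[0,1]$-argmax of $\alpha_{nm}$ via $\|H_{nm}-\mathrm{id}\|_\infty\to 0$, apply the argmax continuous-mapping theorem (the paper cites Lemmas A.2--A.4 of \cite{Ferger1} for exactly the measurability and $C_u$-continuity you flag as the main obstacle), and then apply the CMT with $h(x,y)=x/\sqrt{y(1-y)}$. The one point of departure is the identification of the limit law: the paper simply invokes Theorem~1.1 of \cite{Ferger0}, whereas you supply a self-contained derivation via the joint density of $(\tau^*,M^*)$ and a change of variables, which is a nice bonus and recovers $K$ directly.
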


\begin{proof} Recall that $(R_{nm},D_{nm})$ is distribution free under $H_0$, whence we can assume that $F$ and $ G$ both are equal to the distribution function of $U(0,1)$, the uniform distribution on the
unit interval $(0,1)$. So, $F$ is equal to the identity map $Id$ on $[0,1]$. We start with the representation
\begin{equation} \label{rep}
 \sqrt{nm(n+m)}\; T_{nm} :=
\begin{cases}
    \frac{\sqrt{\frac{nm}{n+m}}D_{nm}}{\sqrt{\frac{R_{nm}}{n+m}(1-\frac{R_{nm}}{n+m})}}     & \text{ if } R_{nm} \in \{1, \ldots, n+m-1\}, \\
  0     & \text{ if } R_{nm}=n+m.
\end{cases}
\end{equation}
Note that $D_{nm}= \sup_{x \in [0,1]}\{F_n(x)-G_m(x)\}$ a.s.
Observe that  by (\ref{Hnm})
\begin{equation} \label{HnmH0}
 ||H_{nm}-Id||:=\sup_{x \in [0,1]}|H_{nm}(x)-x| \rightarrow \ \; \text{ a.s.}
\end{equation}
Introduce
$\tau_{nm}:= Z_{(R_{nm})}$.
Since $\frac{R_{nm}}{n+m} = H_{nm}(Z_{(R_{nm})})= H_{nm}(\tau_{nm})$, it follows from (\ref{HnmH0}) that
$$
 \bigg|\frac{R_{nm}}{n+m}-\tau_{nm}\bigg|=|H_{nm}(\tau_{nm})-\tau_{nm}|\le ||H_{nm}-Id|| \rightarrow 0 \; \text{ a.s.}
$$
As a consequence the sequences
$(\sqrt{\frac{nm}{n+m}} D_{nm}, \frac{R_{nm}}{n+m})$ and $(\sqrt{\frac{nm}{n+m}} D_{nm}, \tau_{nm})$ are asymptotical stochastically equivalent.

Our basic idea is to write $(\sqrt{\frac{nm}{n+m}} D_{nm}, \tau_{nm})$ as a functional of the two-sample empirical process
$$
 \alpha_{nm}(x):= \sqrt{\frac{nm}{n+m}}\{F_n(x)-G_m(x)\}, \; x \in [0,1].
$$
To that let $(D[0,1],s)$ be the Shorokhod-space. For every $f \in D[0,1]$ define $M(f):=\sup_{t \in [0,1]} f(t)$ and $A(f):=\{t \in [0,1]: \max\{f(t),f(t-)\}=M(f)\}$ with the convention $f(0-):=f(0)$. By Lemma A.2 in Ferger \cite{Ferger1} $A(f)$ is a non-empty compact subset of [0,1], whence the \emph{argsup-functional} $a(f):=\min A(f)$ is well defined. By definition of
$R_{nm}$ the random variable $\tau_{nm}$ is the smallest maximizing point of $F_n-G_m$, i.e. $\tau_{nm}=a(F_n-G_m)$.
Since $a(c f)=a(f)$ for every positive constant $c$, it follows that
$\tau_{nm} =a(\alpha_{nm})$. Similarly, $M(c f)=c M(f)$ and thus $\sqrt{\frac{nm}{n+m}} D_{nm}= M(\alpha_{nm})$. The functional $L:=(M,a):(D[0,1],s) \rightarrow \mathbb{R}^2$ is Borel-measurable by Lemma A.3 in Ferger \cite{Ferger1} and
continuous on the subset $C_u:=\{f \in C[0,1]: f \text{ has a unique maximizing point}\} \subseteq D[0,1]$. To see this
note that $M$ is continuous on $C[0,1] \supseteq C_u$ and $a$ is continuous on $C_u$ by Lemma A.4 in Ferger \cite{Ferger1}. It is well-known that
$\alpha_{nm} \stackrel{\mathcal{D}}{\rightarrow} B$ in $(D[0,1],s)$, where $B$ is a Brownian bridge.
Now, $B \in C_u$ almost surely, whence an application of the
Continuous Mapping Theorem (CMT) yields that $$(\sqrt{\frac{nm}{n+m}} D_{nm}, \tau_{nm}) = L(\alpha_n) \stackrel{\mathcal{D}}{\rightarrow} L(B)= (M(B),a(B))$$
and by the asymptotical stochastical equivalence we obtain that
$$(\sqrt{\frac{nm}{n+m}} D_{nm}, \frac{R_{nm}}{n+m}) \stackrel{\mathcal{D}}{\rightarrow} (M(B),a(B)).$$
Let $h:\mathbb{R} \times (0,1] \rightarrow \mathbb{R}$ be defined by $h(x,y):=x/\sqrt{y(1-y)}$ for $(x,y) \in \mathbb{R} \times (0,1)=: G$ and $h(x,y):=0$ otherwise.
Obviously $h$ is continuous on $G$. Moreover, $a(B) \in (0,1)$ almost surely. (Indeed, $a(B)$ is uniformly distributed on $(0,1)$.) Thus (\ref{rep}) and another application of the CMT gives
$$
 \sqrt{nm(n+m)} T_{nm} =h(\sqrt{\frac{nm}{n+m}} D_{nm}, \frac{R_{nm}}{n+m}) \stackrel{\mathcal{D}}{\rightarrow} h(M(B),a(B))=:Z.
$$
Now the assertion follows from Theorem 1.1 of \cite{Ferger0}, which says that the distribution of $Z$ is equal to the Maxwell-Boltzmann distribution.
\end{proof}

Figure 1 illustrates the convergence of $K_{nm}$ to $K$.

\begin{figure}[H]
\centering
\includegraphics[scale=0.6]{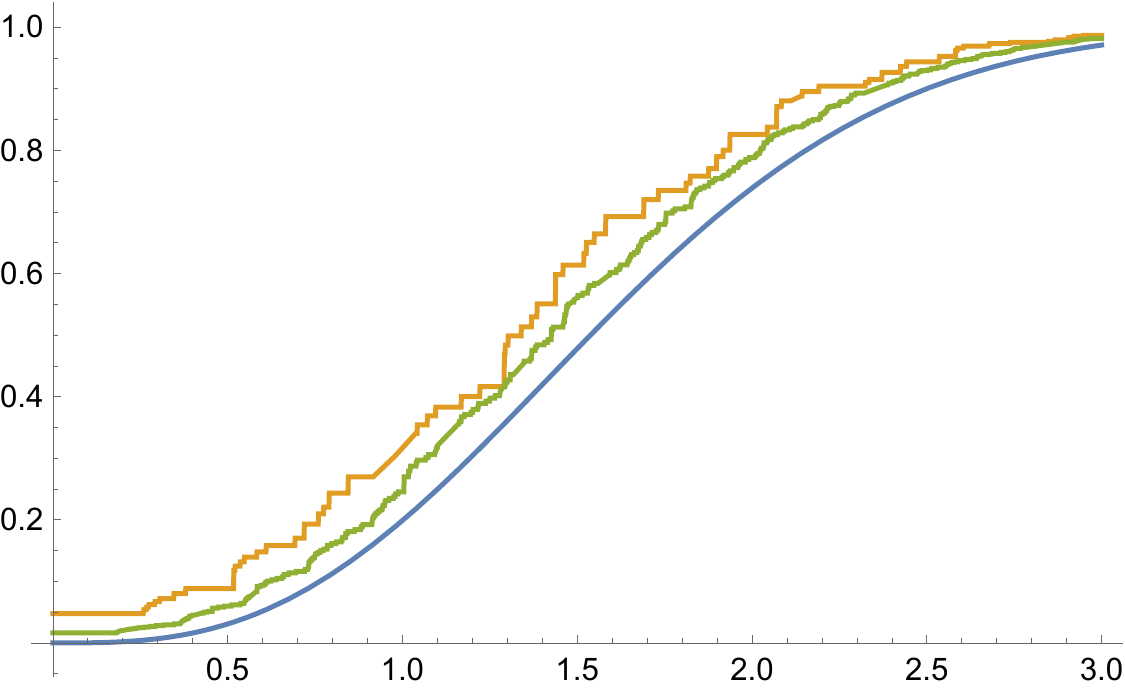}
\caption{Graphs of $K_{10,20}$ (orange), $K_{60,60}$ (green) and $K$ (blue).}
\end{figure}

In a small simulation study with $F(x)=\Phi(x+0.4)$ and $G(x)=\Phi(x)$ (normal location model) and
equal subsample sizes $n=m$ we computed $l=1000$ Monte-Carlo replicates $T_{nn}^{(1)},\ldots,T_{nn}^{(l)}$ and
$D_{nn}^{(1)},\ldots,D_{nn}^{(l)}$ of $T_{nn}$ and $D_{nn}$, respectively. They yielded the corresponding $p$-values $p_V^{(j)}= 1-J_{nn}(T_{nn}^{(j)})$ and
$p_S^{(j)}= L_n(D_{nn}^{(j)}), 1 \le j \le l,$ where
$$
 L_n(x):=\mathbb{P}(D_{nn} > x)=\frac{\binom{2n}{n+[nx]+1}}{\binom{2n}{n}}.
$$

The last equation can be found, for example, in Shorack and Wellner \cite{Shorack} on p.401.
Table 1 below shows the mean $p$-values $p_V=l^{-1} \sum_{j=1}^{l} p_V^{(j)}$ and $p_S=l^{-1} \sum_{j=1}^{l} p_S^{(j)}$.

\begin{table*}[htbp]
\caption{mean p-values}
\label{crit}
\begin{tabular}{c|c|c}
\vspace{0.2cm}
$n=m$ & $p_V$ & $p_S$ \\ \hline
 10 & 0.2802 & 0.2521\\
 15 & 0.2542 & 0.2375\\
 20 & 0.2351 & 0.2407 \\
 25 & 0.2086 & 0.2246 \\
 30 & 0.1882 & 0.2124 \\
 35 & 0.1680 & 0.2059 \\
 40 & 0.1498 & 0.1862\\
 50 & 0.1302 & 0.1781\\
 60 & 0.1020 & 0.1540 \\
 70 & 0.0919 & 0.1505\\
 80 & 0.0717 & 0.1218\\
100 & 0.0501 & 0.0954\\
200 & 0.0106 & 0.0351\\
\hline
\end{tabular}
\end{table*}

Note that, with the exception of $n \in \{10,15\}$, the V-test performs significantly better than the Smirnov test

\section{Two-sided alternatives}
Consider the test problem $H_0:F=G$ against the alternative $H_1^*:F \neq G.$ Here, the Kolmogorov-Smirnov test rejects the hypothesis in the case of large values of $D_{nm}^*=\sup_{x \in \mathbb{R}}|F_n(x)-G_m(x)|.$ Since $D_{nm}^*=\max_{1 \le i \le n+m} |F_n(Z_{(i)})-G_m(Z_{(i)})|$, the random variable $R_{nm}^*:= \min\{1 \le i \le n+m: |F_n(Z_{(i)})-G_m(Z_{(i)})|=D_{nm}^*\}$
is the smallest index among those order statistics $Z_{(i)}$ at which the reflected two-sample process
$\{|F_n(x)-G_m(x)|: x \in \mathbb{R}\}$ attains its maximal value. Now, the counterpart $T_{nm}^*$ of $T_{nm}$ is given by:
\[
T_{nm}^* :=
\begin{cases}
   \frac{D_{nm}^*}{\sqrt{R_{nm}^* (n+m-R_{nm}^*)}}     & \text{ if } R_{nm}^* \in \{1, \ldots, n+m-1\}, \\
  0      & \text{ if } R_{nm}^*=n+m.
\end{cases}
\]

\begin{theorem} If $F=G$, then $\mathbb{P}(\sqrt{nm(n+m)} T_{nm}^* \le x) \rightarrow K^*(x)$,
where
\begin{eqnarray*}
K^*(x) &=& 16 \sum_{0\le j<l<\infty}(-1)^{j+l}\frac{\alpha_j\alpha_l}{\alpha_l^2-\alpha_j^2}[\frac{\Phi(\alpha_j x)-1/2}{\alpha_j}-\frac{\Phi(\alpha_l x)-1/2}{\alpha_l}]\\
&+& 4 \sum_{0\le j < \infty} [\frac{\Phi(\alpha_j x)-1/2}{\alpha_j}-x \varphi(\alpha_j x)], \quad x \ge 0.
\end{eqnarray*}
Here, $\alpha_j = 2 j +1, j \in \mathbb{N}_0,$ and $\varphi$ denotes the density of $N(0,1)$.
\end{theorem}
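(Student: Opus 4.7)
The plan is to mirror the strategy used for Theorem \ref{convergencetoMB}, replacing the supremum and argsup by their absolute-value counterparts, and then to identify the limiting distribution of the resulting functional of a Brownian bridge. Since $(R_{nm}^*, D_{nm}^*)$ is distribution-free under $F=G$, one may assume $F=G=U(0,1)$. On the event $\{R_{nm}^* \in \{1,\ldots,n+m-1\}\}$ one has the representation
\[
\sqrt{nm(n+m)}\, T_{nm}^* \; = \; \frac{\sqrt{nm/(n+m)}\, D_{nm}^*}{\sqrt{(R_{nm}^*/(n+m))\,(1 - R_{nm}^*/(n+m))}},
\]
and zero otherwise. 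Introducing the functionals $M^*(f) := \sup_{t \in [0,1]} |f(t)|$ and $a^*(f) := \min\{t \in [0,1]: \max(|f(t)|, |f(t-)|) = M^*(f)\}$ on the Skorokhod space $(D[0,1],s)$, and setting $\tau_{nm}^* := Z_{(R_{nm}^*)}$, one checks that $(\sqrt{nm/(n+m)}\, D_{nm}^*, \tau_{nm}^*) = (M^*(\alpha_{nm}), a^*(\alpha_{nm}))$.

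The next step is to verify that $(M^*, a^*)$ is Borel-measurable on $(D[0,1],s)$ and continuous on $C_u^* := \{f \in C[0,1] : |f| \text{ has a unique maximizer on } [0,1]\}$. Since $f \mapsto |f|$ preserves continuity and the uniform norm, these properties transfer from the corresponding statements for $(M, a)$ in Lemmas A.2--A.4 of \cite{Ferger1}. A Brownian bridge $B$ attains $\sup B$ and $\inf B$ at unique points a.s., hence $|B|$ has a unique maximizer a.s.\ and $B \in C_u^*$ a.s. Combining the invariance principle $\alpha_{nm} \stackrel{\mathcal{D}}{\rightarrow} B$ in $(D[0,1],s)$ with the Glivenko--Cantelli bound $|R_{nm}^*/(n+m) - \tau_{nm}^*| \le \|H_{nm} - \mathrm{Id}\| \rightarrow 0$ a.s.\ and the Continuous Mapping Theorem, one obtains $(\sqrt{nm/(n+m)}\, D_{nm}^*, R_{nm}^*/(n+m)) \stackrel{\mathcal{D}}{\rightarrow} (M^*(B), a^*(B))$. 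Since $a^*(B) \in (0,1)$ a.s., applying the CMT once more to $h(x,y) = x/\sqrt{y(1-y)}$ yields
\[
\sqrt{nm(n+m)}\, T_{nm}^* \;\stackrel{\mathcal{D}}{\rightarrow}\; Z^* := \frac{M^*(B)}{\sqrt{a^*(B)\,(1-a^*(B))}}.
\]

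The main obstacle is the analytic identification $\mathbb{P}(Z^* \le x) = K^*(x)$, since no off-the-shelf result analogous to Theorem 1.1 of \cite{Ferger0} is at hand, and the joint law of $(a^*(B), M^*(B))$ must be derived from scratch. The plan is to use the classical reflection-principle expansion for the density of $(B(t), \max_{s \le t}|B(s)|)$ on $[0,t]$ together with its independent time-reversed counterpart on $[t,1]$, conditioned on the event that both partial suprema of $|B|$ coincide with the global supremum $v$, attained at time $t$. This yields a joint density of $(a^*(B), M^*(B))$ given by a double series indexed by pairs $(j,l)$ of nonnegative integers with the eigenvalues $\alpha_j = 2j+1$, together with a diagonal contribution from coincident indices. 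A change of variables to $z = v/\sqrt{t(1-t)}$ and evaluation of the resulting $t$-integrals via Gaussian identities of the type $\int_0^1 \Phi(\alpha\, z \sqrt{t(1-t)})\, dt$ then produces the two sums in $K^*(x)$: the off-diagonal terms give the alternating cross sum with prefactor $\alpha_j \alpha_l / (\alpha_l^2 - \alpha_j^2)$, while the diagonal terms give the sum involving $\Phi(\alpha_j x) - 1/2$ and $x\varphi(\alpha_j x)$. The weak-convergence portion of the argument is routine given Theorem \ref{convergencetoMB}; the distributional identification of $Z^*$ is where the real work lies.
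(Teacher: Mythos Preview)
Your weak-convergence argument is exactly the paper's: distribution-freeness reduces to $U(0,1)$, one rewrites $\sqrt{nm(n+m)}\,T_{nm}^*$ as $h\big(M(|\alpha_{nm}|),a(|\alpha_{nm}|)\big)$, transfers the measurability and continuity properties of $(M,a)$ via $f\mapsto |f|$, uses $\alpha_{nm}\stackrel{\mathcal D}{\to}B$ together with the Glivenko--Cantelli bound on $|R_{nm}^*/(n+m)-\tau_{nm}^*|$, and applies the CMT twice to get $\sqrt{nm(n+m)}\,T_{nm}^*\stackrel{\mathcal D}{\to} h(M(|B|),a(|B|))=:Z^*$. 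The paper summarizes all of this in one sentence (``completely analogously to the proof of Theorem~\ref{convergencetoMB}'').

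Where you diverge is in the identification of the law of $Z^*$. You claim that ``no off-the-shelf result analogous to Theorem~1.1 of \cite{Ferger0} is at hand'' and propose to derive the joint density of $(a^*(B),M^*(B))$ from scratch via reflection-principle expansions. That premise is mistaken: the paper invokes \emph{the very same} Theorem~1.1 of \cite{Ferger0} for $Z^*$ as it did for $Z$. That reference treats both the one-sided functional $M(B)/\sqrt{a(B)(1-a(B))}$ and the two-sided functional $M(|B|)/\sqrt{a(|B|)(1-a(|B|))}$, and it already supplies the formula for $K^*$. So the ``main obstacle'' you identify does not exist in this context; the paper's proof is two lines long. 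Your sketch of how one would recover $K^*$ ab initio is plausible in outline, but it amounts to re-deriving a published result rather than filling a genuine gap.
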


\begin{proof} One shows completely analogously to the proof of Theorem \ref{convergencetoMB} that
$$
 \sqrt{nm(n+m)} T_{nn}^* \stackrel{\mathcal{D}}{\rightarrow} h(M(|B|),a(|B|))=:Z^*.
$$
Thus the assertion follows from Theorem 1.1 of \cite{Ferger0}, which gives the distribution of $Z^*$.
\end{proof}

\section{Appendix}
In this short appendix we describe the asymptotic behavior of $T_{nm}$ under (special) alternatives (Lemma \ref{AsymptoticH1}) and under the hypothesis (Lemma \ref{AsymptoticH0}).\\

\begin{lemma} \label{AsymptoticH1} Let $F$ and $G$ be two different continuous distribution functions such that $F-G \ge 0$ has a unique minimizing point $\tau \in \mathbb{R}$. Then $F(\tau)>G(\tau)$ and $H(\tau) \in (0,1)$. Moreover,
\begin{equation} \label{limitTnmH1}
 (n+m) T_{nm} \rightarrow \frac{\sup_{x \in \mathbb{R}}\{F(x)-G(x)\}}{\sqrt{H(\tau)(1-H(\tau))}} \; a.s.
\end{equation}
\end{lemma}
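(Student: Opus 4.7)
The plan is to split the statement into an algebraic/analytic part concerning the values $F(\tau),G(\tau),H(\tau)$ and a probabilistic part concerning the a.s.\ limit of $(n+m)T_{nm}$. For the first part: since $F\neq G$ and $D:=F-G\ge 0$, the supremum $\sup D=D(\tau)$ is strictly positive, whence $F(\tau)>G(\tau)$. This forces $F(\tau)>0$ and $G(\tau)<1$; as $\lambda\in(0,1)$, the convex combination $H(\tau)=\lambda F(\tau)+(1-\lambda)G(\tau)$ then lies strictly between $0$ and $1$.

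For the limit I would rewrite, on the event $\{R_{nm}<n+m\}$,
\[
 (n+m)T_{nm}=\frac{D_{nm}}{\sqrt{\rho_{nm}(1-\rho_{nm})}},\qquad \rho_{nm}:=R_{nm}/(n+m).
\]
The Glivenko--Cantelli theorem gives $\|F_n-F\|_\infty, \|G_m-G\|_\infty, \|H_{nm}-H\|_\infty\to 0$ a.s., hence $D_{nm}\to D(\tau)$ a.s. It therefore suffices to prove $\rho_{nm}\to H(\tau)$ a.s. Setting $\tau_{nm}:=Z_{(R_{nm})}$, the identity $\rho_{nm}=H_{nm}(\tau_{nm})$ together with continuity of $H$ and uniform convergence of $H_{nm}$ reduces this to the single claim $\tau_{nm}\to\tau$ a.s. This argmax-consistency step is the main obstacle.

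To establish $\tau_{nm}\to\tau$, I would invoke the standard well-separated-maximum argument. Because $D$ is continuous, vanishes at $\pm\infty$, and has a unique maximizer, for every $\epsilon>0$ one can find $\delta>0$ and a compact $K\supset(\tau-\epsilon,\tau+\epsilon)$ such that $D(x)\le D(\tau)-\delta$ for all $x\notin(\tau-\epsilon,\tau+\epsilon)$ and $D(x)\le D(\tau)-\delta$ for all $x\notin K$. Combined with $\|(F_n-G_m)-D\|_\infty\to 0$ a.s., this yields that eventually the sup of $F_n-G_m$ over $\mathbb{R}\setminus(\tau-\epsilon,\tau+\epsilon)$ is strictly less than its value at $\tau$, so every maximizer of $F_n-G_m$---in particular $\tau_{nm}$---lies in $(\tau-\epsilon,\tau+\epsilon)$. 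Once $\tau_{nm}\to\tau$ is secured, $\rho_{nm}\to H(\tau)\in(0,1)$ follows, which also guarantees $R_{nm}\in\{1,\dots,n+m-1\}$ eventually, so the $R_{nm}=n+m$ branch in the definition of $T_{nm}$ contributes nothing and (\ref{limitTnmH1}) is immediate from the above quotient representation.
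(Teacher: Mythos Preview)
Your argument is correct and follows the same route as the paper: rewrite $(n+m)T_{nm}$ as $D_{nm}/\sqrt{\rho_{nm}(1-\rho_{nm})}$ with $\rho_{nm}=R_{nm}/(n+m)=H_{nm}(\tau_{nm})$, use Glivenko--Cantelli for $D_{nm}\to D(\tau)$ and $\|H_{nm}-H\|_\infty\to 0$, and reduce everything to the argmax convergence $\tau_{nm}\to\tau$. The only difference is in that last step: the paper invokes an external argmax-convergence result (Theorem~3.3 of \cite{Ferger2}), whereas you supply a direct well-separated-maximum argument exploiting that $D$ is continuous with $D(\pm\infty)=0$ and a unique maximizer; your version is more self-contained but otherwise equivalent.
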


\begin{proof} First, notice that
\begin{equation} \label{rep2}
 (n+m)\; T_{nm} :=
\begin{cases}
    \frac{D_{nm}}{\sqrt{\frac{R_{nm}}{n+m}(1-\frac{R_{nm}}{n+m})}}     & \text{ if } R_{nm} \in \{1, \ldots, n+m-1\}, \\
  0     & \text{ if } R_{nm}=n+m.
\end{cases}
\end{equation}
Let $\tau_{nm}:= \argmaxR (F_n(x)-G_m(x))$ be the (smallest) maximizing point of $F_n-G_m$. Recall from the proof of Theorem \ref{convergencetoMB} that $\frac{R_{nm}}{n+m}=H_{nm}(\tau_{nm}).$ Thus we obtain
\begin{equation} \label{Rnm}
 \Big|\frac{R_{nm}}{n+m}-H(\tau)\Big| =|H_{nm}(\tau_{nm})-H(\tau)| \le ||H_{nm}-H||+|H(\tau_{nm})-H(\tau)|.
\end{equation}
It follows from Theorem 3.3 in Ferger \cite{Ferger2} that $\tau_{nm} \rightarrow \tau$, whence (\ref{Hnm}), (\ref{Rnm}) and continuity of $H$ yield
that $R_{nm}/({n+m}) \rightarrow H(\tau) \; a.s.$ Conclude with (\ref{limitDnm}) that
\begin{equation} \label{DR}
  (D_{nm},\frac{R_{nm}}{n+m}) \rightarrow (\sup_{x \in \mathbb{R}}\{F(x)-G(x)\}, H(\tau)).
\end{equation}
We know that $F(\tau)-G(\tau) > 0$ is positive, because otherwise it followed that $F=G$ in contradiction to our assumption. Moreover,
$H(\tau) \in (G(\tau),F(\tau))$, because $\lambda \in (0,1).$ Consequently, $H(\tau) \in (0,1)$, whence the limit in (\ref{DR}) lies in
$G=\mathbb{R} \times (0,1).$ Recall the function $h$ in the proof of Theorem \ref{convergencetoMB}, which is continuous on $G$.
By (\ref{rep2}) and (\ref{DR}) it follows that
$$
 (n+m) T_{nm} = h(D_{nm},\frac{R_{nm}}{n+m}) \rightarrow h(\sup_{x \in \mathbb{R}}\{F(x)-G(x)\}, H(\tau)) \; \text{a.s.}
$$
which gives the desired result (\ref{limitTnmH1}).
\end{proof}

\begin{lemma} \label{AsymptoticH0} Assume $H_0$ holds. Then
\begin{equation} \label{limitTnmH0}
 (n+m) T_{nm} \stackrel{\mathbb{P}}{\rightarrow} 0.
\end{equation}
\end{lemma}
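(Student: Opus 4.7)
The plan is to exploit the trivial algebraic identity
\[
 (n+m)\,T_{nm} \;=\; \sqrt{\tfrac{n+m}{nm}}\;\cdot\;\sqrt{nm(n+m)}\,T_{nm},
\]
which holds on the whole probability space (both sides vanish when $R_{nm}=n+m$, and on the complementary event it is immediate from the definition of $T_{nm}$). This reduces the claim to combining the already-established limit law of $\sqrt{nm(n+m)}\,T_{nm}$ with a purely deterministic rate, so essentially no new probabilistic work is needed.

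First, I would invoke Theorem~\ref{convergencetoMB}: under $H_0$ and the limiting regime (\ref{limitingregime}) the sequence $\sqrt{nm(n+m)}\,T_{nm}$ converges in distribution to the Maxwell--Boltzmann variable $Z$, and is therefore tight, i.e.\ $\sqrt{nm(n+m)}\,T_{nm}=O_{\mathbb{P}}(1)$.

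Second, I would observe that under (\ref{limitingregime}) one has $\min\{n,m\}\to\infty$, hence
\[
 \sqrt{\tfrac{n+m}{nm}}\;=\;\sqrt{\tfrac{1}{n}+\tfrac{1}{m}}\;\longrightarrow\;0.
\]
A single application of Slutsky's theorem (the product of a deterministic null sequence and a tight sequence is $o_{\mathbb{P}}(1)$) then yields $(n+m)\,T_{nm}\stackrel{\mathbb{P}}{\to}0$, which is precisely (\ref{limitTnmH0}).

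There is no real obstacle here. The only point worth verifying is the piecewise identity above, and this is immediate. One could alternatively try a more elementary route by bounding $\sqrt{R_{nm}(n+m-R_{nm})}\ge\sqrt{n+m-1}$ and combining with $D_{nm}=O_{\mathbb{P}}(\sqrt{(n+m)/(nm)})$, but a quick check shows this crude estimate only gives $(n+m)T_{nm}=O_{\mathbb{P}}(1)$; to get $o_{\mathbb{P}}(1)$ one really needs the correct order of the denominator, which is exactly what Theorem~\ref{convergencetoMB} supplies through the joint limit of $(\sqrt{nm/(n+m)}\,D_{nm},R_{nm}/(n+m))$.
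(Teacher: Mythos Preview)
Your argument is correct and matches the paper's own proof essentially line for line: the paper writes $(n+m)T_{nm}=\sqrt{(n+m)/(nm)}\,Z_{nm}$ with $Z_{nm}=\sqrt{nm(n+m)}\,T_{nm}\stackrel{\mathcal{D}}{\rightarrow}Z$ by Theorem~\ref{convergencetoMB}, and concludes from the vanishing deterministic factor. Your additional remarks (verifying the piecewise identity and noting why the crude bound $\sqrt{R_{nm}(n+m-R_{nm})}\ge\sqrt{n+m-1}$ is insufficient) are correct but go beyond what the paper records.
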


\begin{proof} Notice that $(n+m) T_{nm} = \sqrt{\frac{n+m}{nm}} Z_{nm}$, where $Z_{nm} = \sqrt{nm(n+m)} T_{nm} \stackrel{\mathcal{D}}{\rightarrow} Z$
by Theorem \ref{convergencetoMB}. Now, the assertion follows, because the deterministic factor $\sqrt{\frac{n+m}{nm}}$ converges to zero.
\end{proof}





\end{document}